\documentclass[12pt,english,reqno]{amsart}  

\usepackage[T1]{fontenc} 
\usepackage{geometry} 
\usepackage{amssymb,amsmath,amsthm} 

\usepackage{pdfpages}
\usepackage{graphicx}
\usepackage{tikz} 
\usetikzlibrary{arrows,automata} 

\usepackage{hyperref}
\hypersetup{
    colorlinks=true,
    linkcolor=blue,
    filecolor=magenta,      
    urlcolor=cyan,
}

\usepackage[utf8]{inputenc}

\newtheorem{theorem}{Theorem}[section]
\newtheorem{corollary}[theorem]{Corollary}
\newtheorem{lemma}[theorem]{Lemma}

\newtheorem{conjecture}[theorem]{Conjecture}

\newtheorem{remark}[theorem]{Remark}

\newcommand{\bfS}[1]{\mathbb{S}^{#1}} 
\newcommand{\bfR}[1]{\mathbb{R}^{#1}} 

\newcommand{\mbS}{\mathbb S}

\title[Stability Index and Yau's Conjecture for Carlotto-Schulz examples]{The stability index and Yau's conjecture for Carlotto-Schulz minimal hypertori, part II}
\author{Oscar Perdomo}
\address{Central Connecticut State University}
\email{perdomoosm@ccsu.edu}
\date{\today}

\begin{document}

\maketitle

\begin{abstract}
For any closed minimal hypersurface $M$ in the $N+1$-dimensional Euclidean sphere $\mbS^{N+1}$, $-N$ is an eigenvalue of the stability operator. In this paper we show that the multiplicity of this eigenvalue for the Carlotto and Schulz minimal embedding \(X_{CS}^{n}:\bfS{n-1}\times\bfS{n-1}\times\bfS{1}\to \bfS{2n}\) is at least $2n+1+n^2$. We conjecture that if $n>2$, then the stability index of $X_{CS}^{n}$ is $\frac{1}{3} \left(n^3+9 n^2+11 n+3\right)$ and for the hypertorus in $\mbS^4$ (case $n=2$) the stability index is $27$. We numerically verify the conjecture for the first 100 values of $n$.  We also numerically verify that Yau's conjecture on the first eigenvalue of the Laplacian holds when $2\le n\le 260$.

\end{abstract}

\section{Introduction}

Let $M$ be an $N$-dimensional compact manifold and let us assume that $X=(x_1,\dots,x_{N+2}):M\to \mbS^{N+1} $ is a minimal immersion. The stability operator on \(M\) is given by
\[
J = -\Delta - |A|^2 - N,
\]
where \(|A|^2 = \kappa_1^2 + \dots + \kappa_N^2\) and $\kappa_i$ are the principal curvatures of $M$. If \(\nu =(\nu_1,\dots,\nu_{N+2}): M \to \bfR{N+2}\) denotes its Gauss map, then 

\begin{eqnarray}\label{knowE}
J(\nu_i)=-N\nu_i\quad\hbox{and}\quad J(f_{ij})=0\, f_{ij}\quad {if}\quad f_{ij}=\nu_ix_j-\nu_jx_i .
\end{eqnarray}

The {\it stability index} of \(M\) is the number of negative eigenvalues (counted with multiplicity) of the stability operator. It is not difficult to show that when \(M\) is not totally geodesic, then the functions $\nu_i$ are linearly independent and therefore, these functions
contribute  $N+2$ to the stability index. For all the non-totally geodesic examples with known stability  spectrum, the multiplicity of $-N$ is $N+2$. In this paper we show that the multiplicity of $-(2n-1)$ for the minimal immersions
\(X_{CS}^{n}:\bfS{n-1}\times\bfS{n-1}\times\bfS{1}\to \bfS{2n}\) found by Carlotto and Schulz in \cite{CS} is at least $2n+1+n^2$. Regarding their stability index, we introduce  the following conjecture

\begin{conjecture}
If $n>2$, then the stability index of $X_{CS}^{n}$ is $\frac{1}{3} \left(n^3+9 n^2+11 n+3\right)$ and for the hypertorus in $\mbS^4$ (case $n=2$) the stability index is $27$. 
\end{conjecture}

We numerically show the conjecture for the first 100 values of $n$. 

In regard to the eigenvalues of the Laplacian, Yau has conjectured that the first nonzero eigenvalue of $-\Delta$ of any embedded minimal hypersurface in $\mbS^{N+1}$ is $N$. We also numerically check Yau's conjecture for the Carlotto-Schulz minimal embedding for $2\le n\le 260$. 

This paper can be viewed as a continuation of the work in \cite{P}, where the author shows that the stability index of $X_{CS}^{n}$ is at least \(n^2+4n+3\).

\section{Preliminaries}

Let us start this section by recalling the construction of the immersions $X_{CS}^n$ discovered in \cite{CS}. For any integer $n>1$, consider the immersion $X^n_{CS}:\bfS{n-1}\times \bfS{n-1}\times \bfS{1}\to \bfS{2n}\subset\bfR{2n+1}$ given by
\begin{eqnarray}\label{eq:Immersion}
X(y,z,t)=\bigl(\sin r(t)\,\cos\theta(t)\,y,\;\sin r(t)\,\sin\theta(t)\,z,\;\cos r(t)\bigr),
\end{eqnarray}
where $y,z\in\bfS{n-1}$. Under the assumption that the curve
\begin{eqnarray}\label{eq:ImmersionComponents}
\gamma(t)=\bigl(\gamma_1(t),\;\gamma_2(t),\;\gamma_3(t)\bigr)=\bigl(\sin r(t)\,\cos\theta(t),\;\sin r(t)\,\sin\theta(t),\;\cos r(t)\bigr)
\end{eqnarray}
is parametrized by arc length, there exists a function $\alpha(t)$ such that
\begin{equation}\label{eq:arclength}
\begin{aligned}
r'(t)&=\cos\alpha(t),\\
\theta'(t)&=\dfrac{\sin\alpha(t)}{\sin r(t)}.
\end{aligned}
\end{equation}
The curve $\gamma$ is called the \emph{profile curve}. A direct computation shows that the Gauss map of $X$ has the form
\[
\nu(t,y,z)=\bigl(\nu_1(t)\,y,\;\nu_2(t)\,z,\;\nu_3(t)\bigr),
\]
with
\begin{equation}\label{eq:GaussComponents}
\begin{aligned}
\nu_1(t)&=\cos r(t)\,\sin\alpha(t)\,\cos\theta(t)+\cos\alpha(t)\,\sin\theta(t),\\
\nu_2(t)&=\cos r(t)\,\sin\alpha(t)\,\sin\theta(t)-\cos\alpha(t)\,\cos\theta(t),\\
\nu_3(t)&=-\sin r(t)\,\sin\alpha(t).
\end{aligned}
\end{equation}
As shown in \cite{CS}, the immersion $X$ is minimal if and only if
\begin{equation}\label{eq:minimality}
\alpha'(t)=(2n-2)\,\csc r(t)\,\cos\alpha(t)\,\cot\bigl(2\theta(t)\bigr)
-(2n-1)\,\cot r(t)\,\sin\alpha(t).
\end{equation}
Using \eqref{eq:arclength} and \eqref{eq:minimality} yields the principal curvatures
\begin{align*}
\kappa_u(t)&=\csc r(t)\,\cos\alpha(t)\,\tan\theta(t)+\cot r(t)\,\sin\alpha(t),\\
\kappa_v(t)&=\cot r(t)\,\sin\alpha(t)-\csc r(t)\,\cos\alpha(t)\,\cot\theta(t),\\
\kappa_t(t)&=2\,\csc r(t)\,\cos\alpha(t)\,\cot\bigl(2\theta(t)\bigr)-2\,\cot r(t)\,\sin\alpha(t).
\end{align*}
Therefore, the square of the norm of the shape operator is
\begin{eqnarray}\label{nsA}
|A|^2=(n-1)\,\kappa_u^2+(n-1)\,\kappa_v^2+\kappa_t^2.
\end{eqnarray}
The existence of the embedding \(X\) follows from the following theorem in \cite{CS}:

\begin{theorem}[Existence of the  Carlotto–Schulz  embedding \cite{CS}]\label{ThmCS}
For any integer $n>1$, there exist \(s^* = T/4\) and \(r_0\in(0,\pi)\) such that the unique solution 
\(\bigl(r(t),\theta(t),\alpha(t)\bigr)\) of the system 
\eqref{eq:arclength}–\eqref{eq:minimality}, with initial conditions
\[
\theta(0)=\frac{\pi}{4},\quad
r(0)=r_0,\quad
\alpha(0)=-\frac{\pi}{2},
\]
satisfies at \(t=s^*\)
\[
\theta(s^*)>0,\quad
r(s^*)=\frac{\pi}{2},\quad
\alpha(s^*)=0.
\]
Moreover, on the interval \([0,s^*]\), the function \(\theta(t)\) is strictly decreasing, while \(r(t)\) and \(\alpha(t)\) are strictly increasing.
\end{theorem}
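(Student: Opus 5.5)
This is a shooting argument in the single parameter $r_0$, applied to the planar system \eqref{eq:arclength}--\eqref{eq:minimality}. Write $\Phi(t;r_0)=(r,\theta,\alpha)(t;r_0)$ for the solution with the prescribed initial data. Since $\alpha(0)=-\pi/2$, we have $r'(0)=0$ and $\theta'(0)=-1/\sin r_0<0$. At $\theta=\pi/4$ the term $\cot(2\theta)$ vanishes, so the minimality equation gives $\alpha'(0)=(2n-1)\cot r_0$. I would therefore take $r_0\in(0,\pi/2)$, which makes $\alpha'(0)>0$. The system is reversible with respect to the reflection $(t,\theta,\alpha)\mapsto(-t,\pi/2-\theta,-\pi-\alpha)$ that fixes the initial point, and it is also symmetric at $r=\pi/2$, $\alpha=0$ under $t\mapsto 2s^*-t$, $r\mapsto\pi-r$. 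Together these symmetries will close the curve with period $T=4s^*$, so it suffices to construct the quarter arc.

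\textbf{Step 1 (monotonicity on the first arc).} I would define $s(r_0)$ as the first time at which either $\alpha=0$ or $r=\pi/2$ or $\theta=0$. On $[0,s(r_0))$ we have $\alpha\in(-\pi/2,0)$, hence $r'=\cos\alpha>0$ and $\theta'=\sin\alpha/\sin r<0$. Next I would check that $\alpha'>0$ persists while $r<\pi/2$ and $\theta<\pi/4$. On that region $\cot(2\theta)>0$ and $\cos\alpha>0$, so the first term of \eqref{eq:minimality} is positive. Also $\cot r>0$ and $\sin\alpha<0$, so the second term $-(2n-1)\cot r\sin\alpha$ is positive as well. This gives the strict monotonicity claimed in the theorem for any $r_0<\pi/2$. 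I would also check that $\theta$ stays away from $0$ while $\alpha<0$. As $\theta\to0$, the $\cot 2\theta$ term blows up positively, and this forces $\alpha$ up to $0$ before $\theta$ vanishes; comparing $d\alpha/d\theta$ near $\theta=0$ makes this precise. So the exit happens either through $\alpha=0$ or through $r=\pi/2$.

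\textbf{Step 2 (endpoint behaviour of the shooting).} Set $F(r_0)=r(s(r_0))-\pi/2$ when the exit is through $\alpha=0$, and $F(r_0)=-\alpha(s(r_0))$, made positive with a sign convention, when the exit is through $r=\pi/2$. For $r_0\to0$, rescale near the origin: the $(2n-1)\cot r$ term dominates, so $\alpha$ rises to $0$ almost immediately while $r$ is still small, giving $F<0$. For $r_0\to\pi/2^-$, $\cot r_0\to0$ and the solution stays close to the Clifford-type solution $r\equiv\pi/2$, $\alpha\equiv-\pi/2$. In this regime $r$ should reach $\pi/2$ while $\alpha$ is still negative, giving the opposite sign; I would verify this by linearizing around that solution. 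Continuous dependence on $r_0$ is available because the exits are transversal ($\alpha'>0$, $r'>0$), and the intermediate value theorem then gives an $r_0$ at which both conditions hold at the same time $s^*$.

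\textbf{Main obstacle.} The delicate point is the $r_0\to\pi/2$ limit. There the linearized equation is an oscillator, and one must show that $r-\pi/2$ changes sign before $\alpha+\pi/2$ does, with the correct ordering for every $n\ge2$. I would try explicit Sturm comparison using frequencies built from $2n-1$ and $2n-2$; if that fails, I would fall back on the argument of \cite{CS} itself.
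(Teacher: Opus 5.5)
The paper does not prove this theorem; it quotes it from \cite{CS}. Your outline therefore has to stand on its own, and most of it does.

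The following parts are fine:
\begin{itemize}
\item The invariant region $\{-\pi/2<\alpha<0,\ r<\pi/2,\ 0<\theta<\pi/4\}$, with $\alpha'>0$, $r'>0$, $\theta'<0$.
\item The exclusion of $\theta\to0$. In that region $\frac{d}{d\theta}\ln\cos\alpha=-(2n-2)\cot 2\theta+(2n-1)\cos r\tan\alpha\le-(2n-2)\cot 2\theta$, so $\cos\alpha$ would blow up.
\item The transversality of both exits.
\item The $r_0\to0$ endpoint, via rescaling to the Euclidean $O(n)\times O(n)$ problem in $\mathbb{R}^{2n}$.
\end{itemize}

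Three small corrections:
\begin{itemize}
\item In the rescaling both terms of $\alpha'$ are of order $1/r$, so nothing ``dominates.'' What forces $\alpha=0$ at finite $\rho$ in the limit problem is $\alpha'\ge(2n-2)\cos\alpha\cot(2\theta)/\rho$ together with $\rho\le\rho_1+t$.
\item The second reflection is $(t,r,\theta,\alpha)\mapsto(2s^*-t,\pi-r,\theta,-\alpha)$; you omitted $\alpha\mapsto-\alpha$.
\item You should say why the exit happens in finite time. Since $\alpha$ increases in $(-\pi/2,0)$, $r'=\cos\alpha\ge\cos\alpha(t_1)>0$ for $t\ge t_1>0$.
\end{itemize}

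The genuine gap is the $r_0\to\pi/2^-$ endpoint. This is the heart of the existence argument, and you leave it open and misframe it.

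The limiting solution is $(r,\theta,\alpha)=(\pi/2,\pi/4-t,-\pi/2)$, which is the totally geodesic equator, not a Clifford-type solution. It reaches the singular set $\theta=0$ at $t=\pi/4$, so closeness holds only on compact subsets of $[0,\pi/4)$. On those sets $\alpha=-\pi/2+O(\epsilon)$ with $\epsilon=\pi/2-r_0$, so $\alpha$ cannot reach $0$ at all. There is no race between sign changes of $r-\pi/2$ and $\alpha+\pi/2$. What you must prove is that the first-order term $u$ in $\pi/2-r=\epsilon u+o(\epsilon)$ vanishes strictly before $t=\pi/4$.

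Linearizing gives $u''-(2n-2)\tan(2t)\,u'+(2n-1)u=0$ with $u(0)=1$, $u'(0)=0$. Equivalently, $(\cos^{n-1}(2t)\,u')'=-(2n-1)\cos^{n-1}(2t)\,u$, which is the invariant Jacobi equation $\Delta u+(2n-1)u=0$ on $\mathbb{S}^{2n-1}$.

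This is not a usable oscillator. Naive comparison with $w''+(2n-1)w=0$ only yields a zero before $\pi/(2\sqrt{2n-1})$. That bound is below $\pi/4$ for $n\ge3$ but not for $n=2$, which is the hypertorus case.

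What works for every $n\ge2$ is the singular weight:
\begin{itemize}
\item Suppose $u>0$ on $[0,\pi/4)$. Then $\cos^{n-1}(2t)\,u'(t)=-(2n-1)\int_0^t\cos^{n-1}(2s)\,u(s)\,ds\le-K<0$ for $t\ge t_1$.
\item Hence $u'\le-K\cos^{1-n}(2t)$. This is not integrable at $\pi/4$ precisely because $n\ge2$, so $u\to-\infty$, a contradiction.
\item Therefore $u$ has a first zero $t_0<\pi/4$, with $u'(t_0)<0$.
\end{itemize}

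Differentiable dependence on initial data on $[0,t_0+\delta]$, with $t_0+\delta<\pi/4$, then shows the following. For $r_0$ close to $\pi/2$, the first exit is through $r=\pi/2$ with $\alpha<0$ and $\theta>0$, so $F>0$. With this step supplied, your intermediate value argument closes.
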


\begin{remark}\label{remark on solution}
Due to the symmetries of the ODE system, the graph of \(\theta(t)\) is symmetric about the vertical lines \(t=\tfrac{T}{4}\) and \(t=\tfrac{3T}{4}\). It also has odd symmetry with respect to the points \(\bigl(\tfrac{T}{2},\tfrac{\pi}{4}\bigr)\) and \(\bigl(0,\tfrac{\pi}{4}\bigr)\). Likewise, the graph of \(r(t)\) is symmetric about the lines \(t=0\) and \(t=\tfrac{T}{2}\) and has odd symmetry with respect to the points \(\bigl(\tfrac{T}{4},\tfrac{\pi}{2}\bigr)\) and \(\bigl(\tfrac{3T}{4},\tfrac{\pi}{2}\bigr)\). Consequently, \(\theta(t)\) and \(r(t)\) are \(T\)-periodic. The function \(\alpha(t)\) is not periodic, but it satisfies \(\alpha(t+T)=\alpha(t)+2\pi\). These observations show that  $\gamma(t)$ is a $T$-periodic embedded closed curve in $\bfS{2}$ and therefore $X_{CS}^n$ is not only immersed but embedded.
\end{remark}

Let us continue this section with some results on the spectrum of the Laplacian. Spheres are one of the few examples for which the spectrum of the Laplacian is completely known. We have:

\begin{theorem}\label{SpectrumSphere}  Let $\bfS{k}$ denote the $k$-dimensional unit sphere endowed with the standard metric as a subset of $\bfR{k+1}$. The eigenvalues of the Laplace operator $-\Delta$ are

$$\alpha_1=0,\quad \alpha_2=k, \quad \hbox{in general,} \quad \alpha_i=(i-1)(k+i-2)$$

with multiplicities

$$m_1=1,\quad m_2=k+1, \quad \hbox{and for $i>2$,} \quad m_i=\binom{k+i-1}{i-1}-\binom{k+i-3}{i-3}$$

\end{theorem}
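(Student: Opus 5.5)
The plan is to realize the eigenfunctions of $-\Delta$ on $\bfS{k}$ as restrictions of harmonic homogeneous polynomials on $\bfR{k+1}$, and then count them. First we let $\mathcal P_j$ denote the space of homogeneous polynomials of degree $j$ in $k+1$ variables, and $\mathcal H_j\subset\mathcal P_j$ the harmonic ones. For $p\in\mathcal P_j$ we write $r=|x|$, so that $p=r^j\,p|_{\bfS{k}}$. The Euclidean Laplacian in polar coordinates is
\[
\Delta_{\bfR{k+1}}=\partial_r^2+\tfrac{k}{r}\partial_r+\tfrac1{r^2}\Delta_{\bfS{k}}.
\]
Applying this to $r^j f$ gives $r^{j-2}\bigl(j(j-1)+kj\bigr)f+r^{j-2}\Delta_{\bfS{k}}f$. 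Hence if $p$ is harmonic, $f=p|_{\bfS{k}}$ satisfies $-\Delta f=j(j+k-1)f$. Setting $j=i-1$, this is exactly $\alpha_i=(i-1)(k+i-2)$.

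Next we show these exhaust the spectrum. We use the decomposition $\mathcal P_j=\mathcal H_j\oplus r^2\mathcal P_{j-2}$. This holds because multiplication by $r^2$ is the adjoint of $\Delta$ for the Fischer inner product $\langle p,q\rangle=p(\partial)\bar q$, so $\mathcal H_j=\ker\Delta=(r^2\mathcal P_{j-2})^\perp$. Iterating, restrictions of all polynomials to $\bfS{k}$ equal $\bigoplus_j \mathcal H_j|_{\bfS{k}}$. By Stone–Weierstrass these restrictions are dense in $C(\bfS{k})$, hence in $L^2$. Since $-\Delta$ is symmetric, the spaces $\mathcal H_j|_{\bfS{k}}$ for distinct eigenvalues are orthogonal; the values $j(j+k-1)$ are strictly increasing in $j$, so they are distinct. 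A complete orthogonal decomposition into eigenspaces means no other eigenvalues occur, and the multiplicity of $\alpha_{j+1}$ is $\dim\mathcal H_j$. Restriction is injective on $\mathcal H_j$ by homogeneity.

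Finally we compute dimensions. We have $\dim\mathcal P_j=\binom{k+j}{j}$. The map $\Delta:\mathcal P_j\to\mathcal P_{j-2}$ is surjective by the orthogonal decomposition above, so
\[
\dim\mathcal H_j=\binom{k+j}{j}-\binom{k+j-2}{j-2}.
\]
With $j=i-1$ this is $\binom{k+i-1}{i-1}-\binom{k+i-3}{i-3}$. The same formula gives $m_1=1$ and $m_2=k+1$.

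The only delicate step is the decomposition $\mathcal P_j=\mathcal H_j\oplus r^2\mathcal P_{j-2}$, which is the surjectivity of $\Delta$. I would handle it with the Fischer inner product adjointness, which reduces the claim to positive-definiteness of that inner product.
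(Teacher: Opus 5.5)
The paper gives no proof of this theorem. It quotes it as the classical description of the spectrum of the round sphere and uses it only to read off the values $\alpha_i$ and $m_i$ that enter the operators $S_{ij}$.

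Your argument is the standard harmonic-polynomial proof, and it is correct. The polar form of the Laplacian shows that $\mathcal H_j|_{\bfS{k}}$ consists of eigenfunctions with eigenvalue $j(j+k-1)$. The Fischer-product decomposition $\mathcal P_j=\mathcal H_j\oplus r^2\mathcal P_{j-2}$, together with Stone--Weierstrass, gives completeness. The dimension count then produces exactly the stated $m_i$.

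A few routine details are worth making explicit when you write it up.

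First, you show an eigenfunction $u$ with eigenvalue $j(j+k-1)$ lies in $\mathcal H_j|_{\bfS{k}}$, and that no eigenvalue outside the list occurs, using orthogonality of $u$ to every $\mathcal H_{j'}|_{\bfS{k}}$ with $j'\neq j$. This uses that eigenfunctions are smooth (elliptic regularity), so the symmetry of $-\Delta$ applies to them. It also uses that, because $\mathcal H_j|_{\bfS{k}}$ is finite-dimensional and hence closed, completeness yields the orthogonal splitting $L^2(\bfS{k})=\mathcal H_j|_{\bfS{k}}\oplus\overline{\bigoplus_{j'\neq j}\mathcal H_{j'}|_{\bfS{k}}}$.

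Second, the count $\dim\mathcal H_j=\dim\mathcal P_j-\dim\mathcal P_{j-2}$ needs that multiplication by $r^2$ is injective. Equivalently, $\Delta$ is surjective because it is the Fischer adjoint of an injective map; saying "surjective by the decomposition" alone skips this.

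Third, getting $m_1=1$ and $m_2=k+1$ from the general formula requires the convention $\binom{a}{b}=0$ for $b<0$. The theorem as stated only asserts the binomial formula for $i>2$.
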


The proof of the following lemma can be found in  \cite{P}.

\begin{lemma}\label{formula-laplacian}
Let $\zeta(t,y,z)$ be a smooth function defined along the immersion $X_{XC}^n: \bfS{n-1} \times \bfS{n-1} \times \bfS{1} \to \bfS{2n}$. Then
\[
\Delta\zeta
=\zeta_{tt}
+\tfrac{(n-1)}{2}\,\bigl(\ln(EG)\bigr)'\,\zeta_t
+\frac{1}{E}\,\Delta_{\bfS{n-1}}^y \zeta
+\frac{1}{G}\,\Delta_{\bfS{n-1}}^z \zeta,
\]
where 
\[
E = \sin^2(r(t)) \cos^2(\theta(t)), 
\quad 
G = \sin^2(r(t)) \sin^2(\theta(t)),
\]
and $\Delta_{\bfS{n-1}}^y \zeta$ (resp.\ $\Delta_{\bfS{n-1}}^z \zeta$) denotes the Laplacian on the sphere $\bfS{n-1}$ acting on the $y$-variable (resp.\ the $z$-variable), with $t$ and $z$ (resp.\ $t$ and $y$) held fixed.
\end{lemma}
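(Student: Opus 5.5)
The plan is to compute the Laplacian of a general function on $\bfS{n-1}\times\bfS{n-1}\times\bfS{1}$ directly from the induced metric of $X$. The first step is to compute the metric. Since $\gamma$ is parametrized by arc length, differentiating \eqref{eq:Immersion} shows:
\begin{itemize}
\item the $t$-direction $X_t=(\gamma_1' y,\gamma_2' z,\gamma_3')$ is a unit vector;
\item variations in $y$ tangent to $\bfS{n-1}$ give vectors $(\gamma_1\, dy,0,0)$;
\item variations in $z$ tangent to $\bfS{n-1}$ give vectors $(0,\gamma_2\, dz,0)$.
\end{itemize}
Because $y\cdot dy=0$ and $z\cdot dz=0$, all cross terms vanish. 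The induced metric is therefore the warped product
\[
g = dt^2 + E(t)\, g_{\bfS{n-1}}^y + G(t)\, g_{\bfS{n-1}}^z,
\qquad E=\gamma_1^2=\sin^2 r\cos^2\theta,\quad G=\gamma_2^2=\sin^2 r\sin^2\theta .
\]

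Next we apply the coordinate formula $\Delta\zeta=\frac{1}{\sqrt{\det g}}\partial_i\bigl(\sqrt{\det g}\,g^{ij}\partial_j\zeta\bigr)$. Choose local coordinates $u$ on the first sphere and $v$ on the second, with round metrics $h^y,h^z$. Then
\[
\sqrt{\det g}=E^{(n-1)/2}G^{(n-1)/2}\sqrt{\det h^y}\sqrt{\det h^z}.
\]
The inverse metric is block diagonal, with blocks $1$, $E^{-1}(h^y)^{-1}$ and $G^{-1}(h^z)^{-1}$.

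The $t$-part of the Laplacian is $(EG)^{-(n-1)/2}\partial_t\bigl((EG)^{(n-1)/2}\zeta_t\bigr)$. This equals $\zeta_{tt}+\tfrac{n-1}{2}(\ln(EG))'\zeta_t$.

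For the $u$-part, the factors $E^{(n-1)/2}G^{(n-1)/2}\sqrt{\det h^z}$ do not depend on $u$. They therefore pass through the $u$-derivatives and cancel against the prefactor, leaving $E^{-1}\Delta^y_{\bfS{n-1}}\zeta$. The $v$-part gives $G^{-1}\Delta^z_{\bfS{n-1}}\zeta$ in the same way.

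The only step needing care is checking that the metric has no cross terms and that the coefficient of $g_{\bfS{n-1}}$ is exactly $\gamma_1^2$, respectively $\gamma_2^2$. This is immediate from $|y|=|z|=1$: it gives $y\cdot y_u=0$, which kills the $dt\,du$ terms, and the $y$ and $z$ blocks occupy orthogonal coordinate slots of $\bfR{2n+1}$. The formula is local, so the result holds for every smooth $\zeta$ on the whole manifold. The minimality condition \eqref{eq:minimality} is not used at all.
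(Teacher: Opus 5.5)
Your proof is correct and complete. This paper does not prove the lemma itself but defers to \cite{P}; your argument is the standard computation that yields the stated formula: read off the warped metric $dt^2+E\,g^y_{\bfS{n-1}}+G\,g^z_{\bfS{n-1}}$ from the arc-length condition, then apply the coordinate Laplacian with $\sqrt{\det g}=(EG)^{(n-1)/2}\sqrt{\det h^y}\sqrt{\det h^z}$. The one point you leave implicit is $E,G>0$ everywhere, which is needed for the metric to be nondegenerate and for $\ln(EG)$ to make sense. This holds because $X$ is an immersion, or directly because $r\in(0,\pi)$ and $\theta\in(0,\pi/2)$ along the solution by Theorem~\ref{ThmCS} and its symmetries.
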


\section{Unexpected multiplicity of $-(2n-1)$}

Using the notation in Lemma \ref{formula-laplacian}, let us denote by $y_1,\dots y_{n}$ the coordinate function of the natural immersion of $\bfS{n-1}$ into $\bfR{n}$ and $z_1,\dots z_{n}$ the coordinate function of the natural immersion of another copy of $\bfS{n-1}$ into $\bfR{n}$. It is well-known that 

$$ \Delta_{\bfS{n-1}}^y y_i=-(n-1) y_i \qquad\hbox{and}\qquad  \Delta_{\bfS{n-1}}^z z_i=-(n-1) z_i$$

\begin{lemma}\label{lemma1}  Let $r(t)$ and $\theta(t)$ be the functions that define the immersion $X_{CS}^n$ described in Theorem \ref{ThmCS}. For any $i, j\in\{1,\dots, n\}$, the functions   

$$\zeta_{ij}=f(t)y_iz_j \quad\hbox{with}\quad f = \left(\csc^2 r\sec{\theta}\csc\theta\right)^{n-1}$$ 

satisfy that

$$J(\zeta_{ij})=-(2n-1)\zeta_{ij}$$
\end{lemma}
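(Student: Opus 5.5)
The plan is to separate variables with Lemma \ref{formula-laplacian} and reduce the claim to a single ODE identity in $t$, which should then follow from the structure equations \eqref{eq:arclength} and \eqref{eq:minimality}. Since $\Delta^y_{\bfS{n-1}} y_i=-(n-1)y_i$ and $\Delta^z_{\bfS{n-1}} z_j=-(n-1)z_j$, Lemma \ref{formula-laplacian} applied to $\zeta_{ij}=f(t)y_iz_j$ gives
\[
\Delta\zeta_{ij}=\Bigl(f''+\tfrac{n-1}{2}(\ln(EG))'f'-(n-1)\bigl(\tfrac1E+\tfrac1G\bigr)f\Bigr)y_iz_j .
\]
Here $N=2n-1$, so $J=-\Delta-|A|^2-(2n-1)$. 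The equation $J\zeta_{ij}=-(2n-1)\zeta_{ij}$ is therefore equivalent to
\[
f''+\tfrac{n-1}{2}(\ln(EG))'f'+\Bigl(|A|^2-(n-1)\bigl(\tfrac1E+\tfrac1G\bigr)\Bigr)f=0 .
\]

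The key observation is that $EG=\sin^4 r\cos^2\theta\sin^2\theta$, so $f=(EG)^{-(n-1)/2}$. Write $L=\ln(EG)$ and $h=\ln f=-\tfrac{n-1}{2}L$. Then $f'/f=h'$ and $f''/f=h''+h'^2$, while $\tfrac{n-1}{2}L'=-h'$. The $h'^2$ terms cancel, and the ODE reduces to the first-order-free identity
\[
-\tfrac{n-1}{2}L''+|A|^2-(n-1)\csc^2 r\bigl(\sec^2\theta+\csc^2\theta\bigr)=0 .
\]
This is the real content of the lemma: the choice of $f$ is exactly the one that removes the first-derivative term.

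To verify this identity, I compute $L=4\ln\sin r+2\ln\cos\theta+2\ln\sin\theta$. Using \eqref{eq:arclength},
\[
L'=4\cot r\cos\alpha+4\cot(2\theta)\,\frac{\sin\alpha}{\sin r}.
\]
Differentiating again, I replace $r'$ and $\theta'$ by \eqref{eq:arclength} and $\alpha'$ by the minimality equation \eqref{eq:minimality}. This expresses $L''$ purely as a trigonometric expression in $r,\theta,\alpha$.

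Next I expand $|A|^2=(n-1)\kappa_u^2+(n-1)\kappa_v^2+\kappa_t^2$ from \eqref{nsA} with the given principal curvatures. I expect the terms to organize as a polynomial in $n$ of degree one: the terms proportional to $(n-1)$ should cancel with the part of $-\tfrac{n-1}{2}L''$ not coming from $\alpha'$, and the $\alpha'$-contribution (which carries the factors $(2n-2)$ and $(2n-1)$) should absorb the $\kappa_t^2$ and the $\csc^2 r(\sec^2\theta+\csc^2\theta)$ terms. Simplifications like $\sec^2\theta+\csc^2\theta=4\csc^2(2\theta)$, $\tan\theta-\cot\theta=-2\cot 2\theta$ and $\cos^2\alpha+\sin^2\alpha=1$ should make the cancellation visible.

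The main obstacle is this final trigonometric bookkeeping. I would organize it by grouping coefficients of $\cos^2\alpha$, $\sin^2\alpha$ and $\sin\alpha\cos\alpha$ separately, and by separating the $n$-dependent and $n$-independent parts, checking that each group vanishes. As a sanity check I would confirm the identity numerically at a few points of the solution from Theorem \ref{ThmCS}.
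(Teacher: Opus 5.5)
Your reduction is correct and follows the paper's route: separate variables with Lemma~\ref{formula-laplacian}, reduce to an ODE in $t$, and check that ODE with \eqref{eq:arclength} and \eqref{eq:minimality}. The paper differentiates $f$ twice by brute force and then asserts the resulting identity. Your observation that $f=(EG)^{-(n-1)/2}$ is a genuine improvement. It is legitimate because $r\in(0,\pi)$ and $\theta\in(0,\pi/2)$ along the curve. It makes the first-order terms cancel identically, which explains why this particular $f$ works. What remains to verify is only $-\tfrac{n-1}{2}L''+|A|^2-(n-1)\csc^2 r(\sec^2\theta+\csc^2\theta)=0$.

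One point will make your last step fail if you follow the plan literally: the printed $\kappa_t$ in the preliminaries is missing a factor $n-1$. Since $\kappa_u+\kappa_v=2\cot r\sin\alpha-2\csc r\cos\alpha\cot(2\theta)$, minimality $(n-1)(\kappa_u+\kappa_v)+\kappa_t=0$ forces
\[
\kappa_t=(2n-2)\bigl(\csc r\cos\alpha\cot(2\theta)-\cot r\sin\alpha\bigr)=\alpha'+\cot r\sin\alpha .
\]
The last expression is the geodesic curvature of $\gamma$ in $\bfS{2}$, and the equality is exactly \eqref{eq:minimality}. Write $a=\csc r\cos\alpha\cot(2\theta)$ and $b=\cot r\sin\alpha$. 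Then
\[
L''=-4\csc^2 r\cos^2\alpha-8\csc^2 r\csc^2(2\theta)\sin^2\alpha-4ab+4(a-b)\,\alpha',
\]
\[
\kappa_u^2+\kappa_v^2=2\csc^2 r\cos^2\alpha+4a^2-4ab+2b^2 .
\]
The term $(n-1)\csc^2 r(\sec^2\theta+\csc^2\theta)=4(n-1)\csc^2 r\csc^2(2\theta)$ cancels against the $\alpha'$-free part of $-\tfrac{n-1}{2}L''$ together with $(n-1)(\kappa_u^2+\kappa_v^2)$, up to a leftover $-2(n-1)b(a-b)$. So the cancellation is organized a bit differently from what you anticipated. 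After substituting \eqref{eq:minimality}, the left side of your identity equals
\[
\kappa_t^2-2(n-1)(a-b)(\alpha'+b)=\kappa_t^2-4(n-1)^2(a-b)^2 .
\]
This vanishes for the corrected $\kappa_t$. With the printed $\kappa_t=2(a-b)$, however, it equals $-4n(n-2)(a-b)^2$, which is nonzero for $n\ge 3$; at $t=0$, for instance, $a-b=\cot r_0>0$. So expand $|A|^2$ from \eqref{nsA} using the corrected $\kappa_t$. Otherwise your numerical sanity check will report a failure that comes from the typo, not from the lemma.
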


\begin{proof}
The proof is a direct computation using \eqref{eq:arclength} and \eqref{eq:minimality}. We have that 

\begin{eqnarray*}
\frac{df}{dt}=(1-n)  \left(\cos r \cos \alpha  \sin (2 \theta)-\sin \alpha \sin ^2\theta+\sin \alpha  \cos ^2 \theta \right)\csc ^{2 n-1}(r) \csc^n \theta  \sec^n \theta
\end{eqnarray*} 

and 

$$\frac{d^2f}{dt^2}=(n-1)  \tan ^3\theta  \csc ^{2 n}(r) \csc^n \theta  \sec^n \theta \left(\frac{1}{8} \cos^2 \alpha \cot^2\theta \csc^2\theta\, w_1+\cot^3\theta\,  w_2+\cot^3\theta\, w_3 \right)$$

with
\begin{eqnarray*}
w_1&=&4 (n-1) \cos (2 r) \sin ^2(2 \theta)+(4-6 n) \cos (4 \theta)-2 n+4\\
w_2&=&\cos r \sin (2 \alpha)+2 \sin ^2\alpha  \cot \theta\\
w_3&=& \left(n-\left((2 n-2) \cos ^2r+2n-2\right)\cot ^2\theta+(n-2) \cot ^4\theta \right) \sin^2\alpha +\\
& & \left(4 (n-1) \cot ^2\theta-4 n+2\right)\cos r \sin (2 \alpha) \cot \theta 
\end{eqnarray*} 

We have that 

$$\frac{d^2f}{dt^2}+\tfrac{(n-1)}{2}\,\bigl(\ln(EG)\bigr)'\,\frac{df}{dt}+|A|^2 f
-\frac{n-1}{E} f-
\frac{n-1}{G}f=0
$$

Using the equation above we obtain that 

\begin{eqnarray*}
J(\zeta_{ij})&=& -\Delta(fy_iz_j)-|A|^2fy_iz_j-(2n-1) fy_iz_j\\
&=& -\frac{d^2f}{dt^2}y_iz_j-\tfrac{(n-1)}{2}\,\bigl(\ln(EG)\bigr)'\,\frac{df}{dt}y_iz_j-\frac{z_jf}{E}\,\Delta_{\bfS{n-1}}^y y_i \\
& &-\frac{y_if}{G}\,\Delta_{\bfS{n-1}}^z z_j-|A|^2fy_iz_j-(2n-1)fy_iz_j\\
&=& -\frac{d^2f}{dt^2}y_iz_j-\tfrac{(n-1)}{2}\,\bigl(\ln(EG)\bigr)'\,\frac{df}{dt}y_iz_j+\frac{n-1}{E}\, y_i z_jf\\
& &+\frac{n-1}{G} z_jy_if-|A|^2fy_iz_j-(2n-1)fy_iz_j\\
&= &-(2n-1)\zeta_{ij}
\end{eqnarray*}

\end{proof}

Since the function $f>0$ and the functions $\nu_1 y_i$, $\nu_2z_j$, $\nu_3$, and $fy_iz_j$ are linearly independent, we have

\begin{corollary} The multiplicity of $-(2n-1)$ as an eigenvalue of the stability operator $J$ is at least $n^2+2n+1$.
\end{corollary}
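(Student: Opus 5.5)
The plan is to exhibit $n^2+2n+1$ linearly independent eigenfunctions of $J$ with eigenvalue $-(2n-1)$. Two families are already available. By \eqref{knowE} with $N=2n-1$, the $2n+1$ components of the Gauss map are eigenfunctions; in coordinates these are $\nu_1(t)y_i$, $\nu_2(t)z_j$ and $\nu_3(t)$. By Lemma \ref{lemma1}, the $n^2$ functions $\zeta_{ij}=f(t)y_iz_j$ are also eigenfunctions. So the only real task is to prove that the whole family
\[
\{\nu_1 y_i\}_{i=1}^n\cup\{\nu_2 z_j\}_{j=1}^n\cup\{\nu_3\}\cup\{f y_iz_j\}_{i,j=1}^n
\]
is linearly independent as functions on $\bfS{n-1}\times\bfS{n-1}\times\bfS{1}$.

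Suppose
\[
\sum_i a_i\nu_1 y_i+\sum_j b_j\nu_2 z_j+c\,\nu_3+\sum_{i,j}d_{ij}f y_iz_j=0 .
\]
Fix $t$ and regard the left side as a function of $(y,z)\in\bfS{n-1}\times\bfS{n-1}$. The functions $1$, $y_i$, $z_j$ and $y_iz_j$ are linearly independent there, because they lie in different joint eigenspaces of $(\Delta^y,\Delta^z)$ and are independent within each eigenspace. Alternatively, one can integrate against $1$, $y_k$, $z_l$ and $y_kz_l$, using $\int y_iy_k=\delta_{ik}\,\mathrm{vol}/n$ and the fact that odd moments vanish. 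This gives, for every $t$, the identities $c\,\nu_3(t)=0$, $a_i\nu_1(t)=0$, $b_j\nu_2(t)=0$ and $d_{ij}f(t)=0$.

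Since $f>0$, every $d_{ij}=0$. It then suffices that none of $\nu_1,\nu_2,\nu_3$ vanishes identically in $t$, and this follows from the initial data of Theorem \ref{ThmCS}. At $t=0$ we have $\theta=\pi/4$ and $\alpha=-\pi/2$, so $\nu_3(0)=\sin r_0\neq0$ because $r_0\in(0,\pi)$, and $\nu_1(0)=-\cos r_0\cdot\tfrac{\sqrt2}{2}$. If $\cos r_0=0$, one instead evaluates at $t=s^*$: there $\alpha=0$ and $r=\pi/2$, so $\nu_1(s^*)=\sin\theta(s^*)$ and $\nu_2(s^*)=-\cos\theta(s^*)$. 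These are nonzero because $0<\theta(s^*)<\pi/4$, since $\theta$ is decreasing and positive. Hence $a_i=b_j=c=0$.

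The only point needing care is the separation-of-variables independence on $\bfS{n-1}\times\bfS{n-1}$. It is standard via the orthogonality of spherical harmonics of different bidegree, so I expect no real obstacle. The count is $2n+1+n^2=(n+1)^2$, which gives the corollary.
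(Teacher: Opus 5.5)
Your proposal is correct and follows the same approach as the paper. The paper simply asserts that $\nu_1 y_i$, $\nu_2 z_j$, $\nu_3$ and $f y_i z_j$ are linearly independent, noting $f>0$. Your argument fills in that assertion: separating the $(y,z)$-dependence, then checking that $\nu_3(0)=\sin r_0\neq 0$ and that $\nu_1(s^*)=\sin\theta(s^*)$ and $\nu_2(s^*)=-\cos\theta(s^*)$ are nonzero.
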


\section{Conjecture on the stability index on $X^n_{CS}$}

From the work in \cite{P}, we have that the set of  eigenvalues of the stability operator of the immersions $X_{CS}^n$ is the union of the eigenvalues of the operator  $S_{ij}$ where  $S_{ij}=\mathcal{L}_{ij}-|A|^2-(2n-1)$ with

\[
L_{i,j}[\eta]
= -\eta''(t)
 - \tfrac{n-1}{2}\bigl(\ln(EG)\bigr)'(t)\,\eta'(t)
 + \frac{\alpha_i}{E(t)}\,\eta(t)
 + \frac{\alpha_j}{G(t)}\,\eta(t),
\]

If we denote the eigenvalues of $S_{ij}$ by $\lambda_{ij}^1<\lambda_{ij}^2\le \lambda_{ij}^3\le\dots$ we have that
each one of these eigenvalues must be counted as $m_im_j$ eigenvalues when viewed as an eigenvalue of the stability operator of the immersion $X_{CS}^n$. In other words, each one of these eigenvalues $\lambda$ contributes with $m_im_j$ toward the stability index. Recall that the dimension of the manifold is $2n-1$ and the $\alpha_i$ and $m_i$ are, respectively,  the eigenvalues and multiplicities for the spectrum of the Laplacian of the $n-1$-dimensional sphere.
Some values that will be relevant are 

$$\alpha_1=0,\alpha_2=(n-1), \alpha_3=2n, \alpha_4=3(n+1), \alpha_5=4(n+2)$$

$$m_1=1, m_2=n, m_3=\frac{n^2+n-2}{2},m_4=\frac{n}{6}(n+4)(n-1), m_5=\frac{n}{24}(n-1)(n^2+7n+6)$$

Regarding the eigenvalues of the stability operator of the immersions $X_{CS}^n$ that contribute to the stability index, below are those  that have been mathematically shown to exist:

\begin{itemize}
\item
It is known that $\lambda_{11}^3=-(2n-1)$, therefore $\lambda_{11}^1$ and$\lambda_{11}^2$ are also negative and the operator $S_{11}$ contributes with at least $3$ eigenvalues to the index.
\item
It is known that $\lambda_{21}^2=-(2n-1)$,  and $\lambda_{21}^3=0$, therefore there are exactly two negative eigenvalues for $S_{21}$. Since $m_2=n$ and $m_1=1$ then the operator $S_{21}$ contributes exactly with 
$2n$  towards the index. Likewise $S_{12}$ also contributes with exactly $2n$ eigenvalues to the index.
\item
From Lemma \ref{lemma1}, it follows that $\lambda_{22}^1=-(2n-1)$. Since $m_2=n$, then, this eigenvalue contributes with $n^2$ toward the index. 
\item
The previous three items provide that the index is at least $n^2+4n+3$. One of the main results in \cite{P}.
\item
For the hypertorus (case $n=2$) we can numerically check that $\lambda_{11}^4>0$, $\lambda_{22}^2=0$. Moreover we can check that  $\lambda_{31}^1$, $\lambda_{13}^1$, $\lambda_{41}^1$, $\lambda_{14}^1$, $\lambda_{51}^1$, $\lambda_{15}^1$ are negative  and 
$\lambda_{31}^2$, $\lambda_{13}^2$, $\lambda_{41}^2$, $\lambda_{14}^2$, $\lambda_{51}^2$, $\lambda_{15}^2$,  $\lambda_{16}^1$ and $\lambda_{61}^1$ are positive. Therefore, the stability index of the hypertorus is
{\smaller
$$n^2+4n+3+2 * \frac{n^2+n-2}{2}+2*\frac{n}{6}(n+4)(n-1)+2*\frac{n}{24}(n-1)(n^2+7n+6)$$
}
which is 27 after replacing $n$ with 2.

\item
For the cases $n=3,\ldots,100$ we can numerically check that $\lambda_{11}^4>0$, $\lambda_{22}^2=0$. Moreover we can check that  $\lambda_{31}^1$, $\lambda_{13}^1$, $\lambda_{41}^1$, $\lambda_{14}^1$ are negative  and 
$\lambda_{31}^2$, $\lambda_{13}^2$, $\lambda_{41}^2$, $\lambda_{14}^2$,  $\lambda_{15}^1$ and $\lambda_{51}^1$ are positive. Therefore, numerically for $n=3,\dots 100$,  the stability index of the minimal immersion $X_{CS}^n$ 
is 
{\smaller
$$n^2+4n+3+2 * \frac{n^2+n-2}{2}+2*\frac{n}{6}(n+4)(n-1)=\frac{1}{3} \left(n^3+9 n^2+11 n+3\right)$$
}

\end{itemize}

From the considerations above, we propose the following conjecture:

\begin{conjecture}
If $n>2$,  the stability index of $X_{CS}^{n}$ is $\frac{1}{3} \left(n^3+9 n^2+11 n+3\right)$ and for the hypertorus in $\mbS^4$ (case $n=2$) the stability index is $27$. 
\end{conjecture}

\section{Numerical verification}

For each one of the operators
\[
S_{ij}(\eta)=-\eta^{\prime\prime}-b^\prime \eta^\prime-c\eta,
\]
where $b$ and $c$ are $T$-periodic functions, the eigenvalues of $S_{ij}$ agree with the values of $\lambda$ that satisfy
\[
\delta_{ij}(\lambda)=z_1(T)+z_2'(T)-2=0.
\]

where $z_1(t)$ and $z_2(t)$ satisfy  

\begin{eqnarray*}
z_1^{\prime\prime}+b^\prime z_1^\prime+cz_1+\lambda z_1&=&0\quad \hbox{with} \quad z_1(0)=1\quad\hbox{and} \quad z_1^\prime(0)=0\\
z_2^{\prime\prime}+b^\prime z_2^\prime+cz_2+\lambda z_2&=&0\quad \hbox{with} \quad z_2(0)=0\quad\hbox{and} \quad z_2^\prime(0)=1
\end{eqnarray*}

\subsection{The case $n=2$, the hypertorus in $\bfS{4}$} For this case we are taking $r(0)=1.2321501$ and $T=4\times 0.72803518$. We already know that $\lambda_{11}^3=-3=-(2n-1)$. We need to check that $\lambda_{11}^4>0$. 
Since the periodic eigenvalues of $S_{11}$ correspond to the zeros of the Floquet discriminant $\delta_{11}$, it suffices to show that $\delta_{11}$ has no zeros in the interval $(-3,0)$.

To this end, we numerically evaluate $\delta_{11}$ at the 500 equally spaced values

\[
{-4,-4+0.01,-4+2(0.01),\dots,-4+500(0.01)}.
\]

Recall that, for a given value of $\lambda$, the quantity $\delta_{11}(\lambda)$ is obtained by solving two second-order differential equations with prescribed initial conditions and evaluating the corresponding solutions at $t=T$.

Figure \ref{neq2S11} shows the graph of $\delta_{11}$. As expected, \(
\delta_{11}(-3)=0.
\) Moreover, the numerical computation shows that \(
\delta_{11}(\lambda)<0
\) for every sampled value of $\lambda$ in the interval $(-3,1)$. Therefore $\delta_{11}$ has no zeros in $(-3,0)$, and consequently \(
\lambda_{11}^{4}>0.
\) It follows that $S_{11}$ has exactly three negative eigenvalues.

\begin{figure}[ht]
  \centering
  \includegraphics[width=0.6\textwidth]{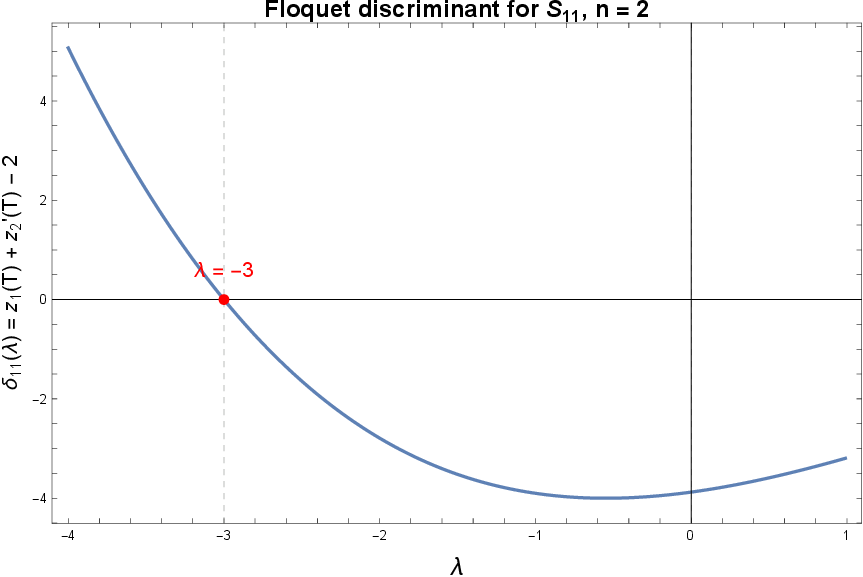}
  \caption{Graph of the Floquet discriminant $\delta_{11}$. The discriminant vanishes at $\lambda=-3$, and remains negative on $(-3,1)$. Hence $\lambda_{11}^{4}>0$.}
  \label{neq2S11}
\end{figure}

Figure \ref{neq2S22} shows that $S_{22}$ has only one negative eigenvalue. We already knew that $-3$ was the first eigenvalue of $S_{22}$. Even though we know that $S_{21}$ has exactly $2$ negative  eigenvalues, we  plotted the graph of the function $\delta_{21}$ (see Figure \ref{neq2S21}) to show that the first eigenvalue of $S_{21}$ is between $-19$ and $-18$. 

\begin{figure}[ht]
  \centering
  \includegraphics[width=0.6\textwidth]{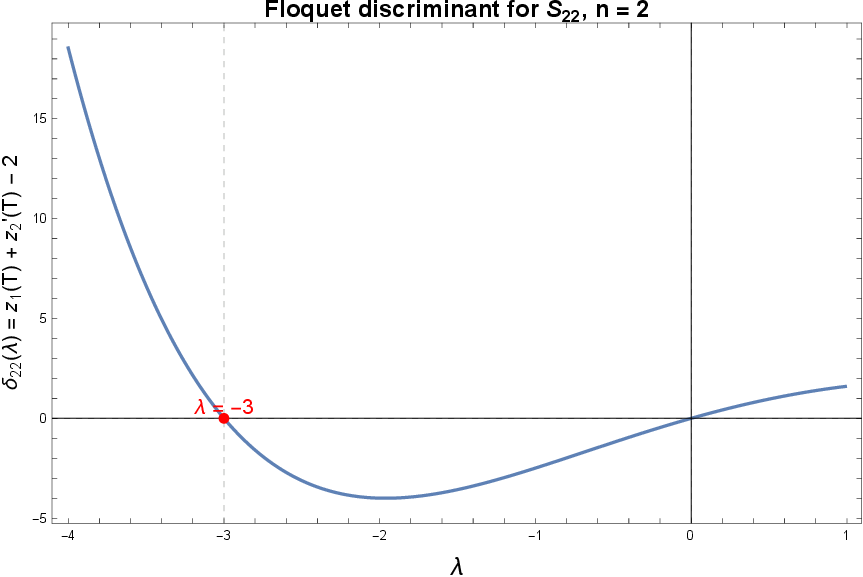}
  \caption{Graph of the Floquet discriminant $\delta_{22}$. The discriminant vanishes at $\lambda=-3$ and $\lambda=0$, and remains negative on $(-3,1)$. Hence $\lambda_{22}^{2}=0$.}
  \label{neq2S22}
\end{figure} 

\begin{figure}[ht]
  \centering
  \includegraphics[width=0.6\textwidth]{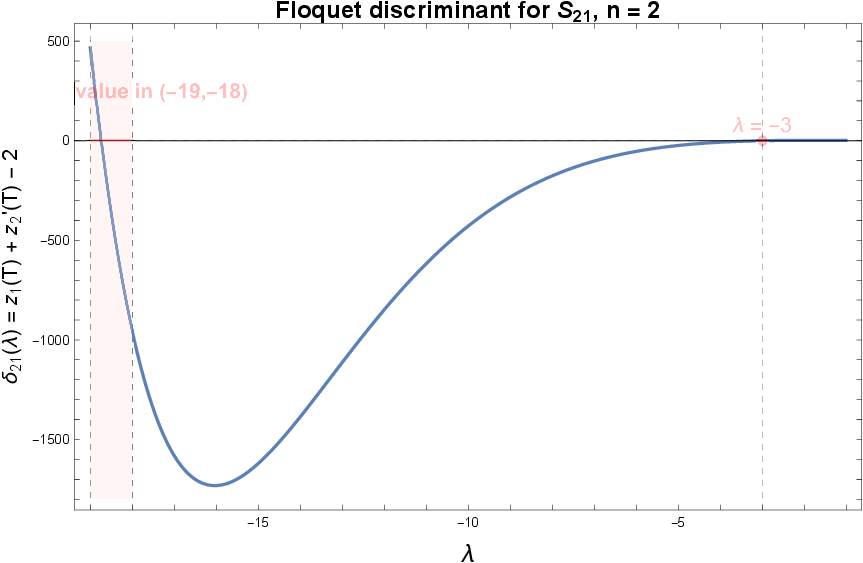}
  \caption{Graph of the Floquet discriminant $\delta_{21}$. This image shows that $-19<\lambda_{21}^1<-18$.}
  \label{neq2S21}
\end{figure}

We will use the fact that if $j>k$ then $\lambda_{ji}^\ell>\lambda_{ki}^\ell$ for any pair of positive integers $i$ and $\ell$. This follows from the Rayleigh characterization of eigenvalues. See \cite{P} for details. Since $-19<\lambda_{21}^1<-18$, then $\lambda_{31}^1>-19$. Figure \ref{neq2S31} shows that $-16<\lambda_{31}^1<-15$. By plotting values of $\lambda$ between $-19$ and $0$, we can numerically verify that there are no additional negative eigenvalues for $S_{31}$.

Using the same arguments, we obtain that $-10.5<\lambda_{41}^1<-9.5$ and $\lambda_{41}^2>0$ (see Figure \ref{neq2S41}); $-2.5<\lambda_{51}^1<-1.5$ and $\lambda_{51}^2>0$ (see Figure \ref{neq2S51}); and finally, we can show that $\lambda_{61}^1>0$ (see Figure \ref{neq2S61}) and that $\lambda_{32}^1>0$ (see Figure \ref{neq2S32}).

Since $\lambda_{32}^1>0$, the Rayleigh characterization implies that $\lambda_{j2}^1>0$ for every $j\ge 3$. For the pairs of indices considered in these computations, we also observe numerically that $\delta_{ij}(\lambda)=\delta_{ji}(\lambda)$.

\begin{figure}[ht]
  \centering
  \includegraphics[width=0.6\textwidth]{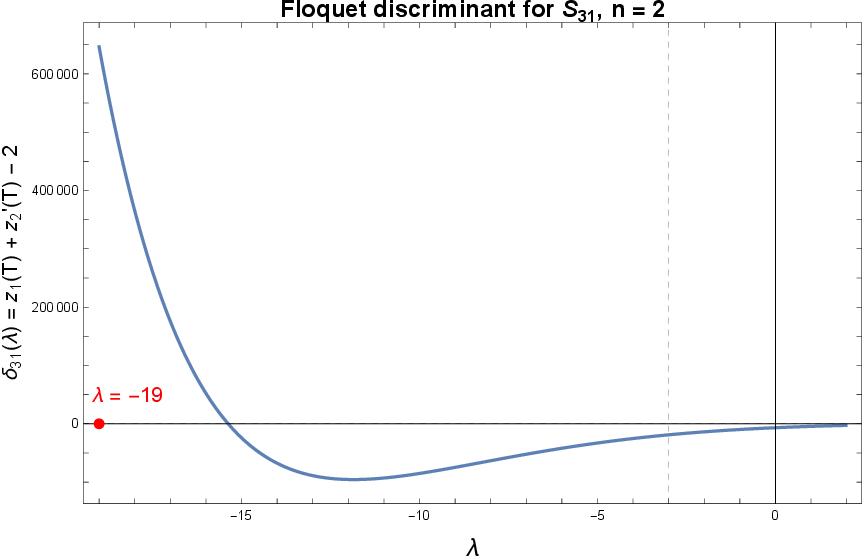}
  \caption{Graph of the Floquet discriminant $\delta_{31}$. This image shows that $-16<\lambda_{31}^1<-15$.}
  \label{neq2S31}
\end{figure} 

\begin{figure}[ht]
  \centering
  \includegraphics[width=0.6\textwidth]{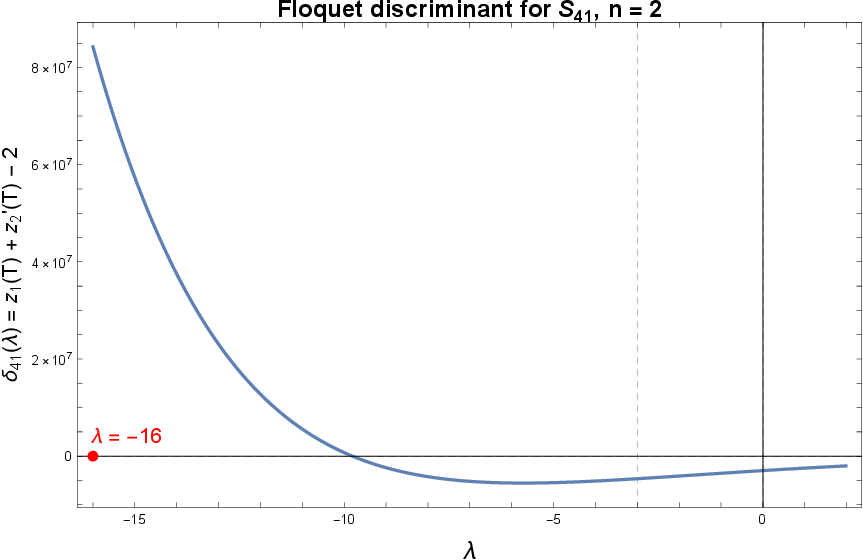}
  \caption{Graph of the Floquet discriminant $\delta_{41}$.  This image shows that $-10.5<\lambda_{41}^1<-9.5$.}
  \label{neq2S41}
\end{figure} 

\begin{figure}[ht]
  \centering
  \includegraphics[width=0.6\textwidth]{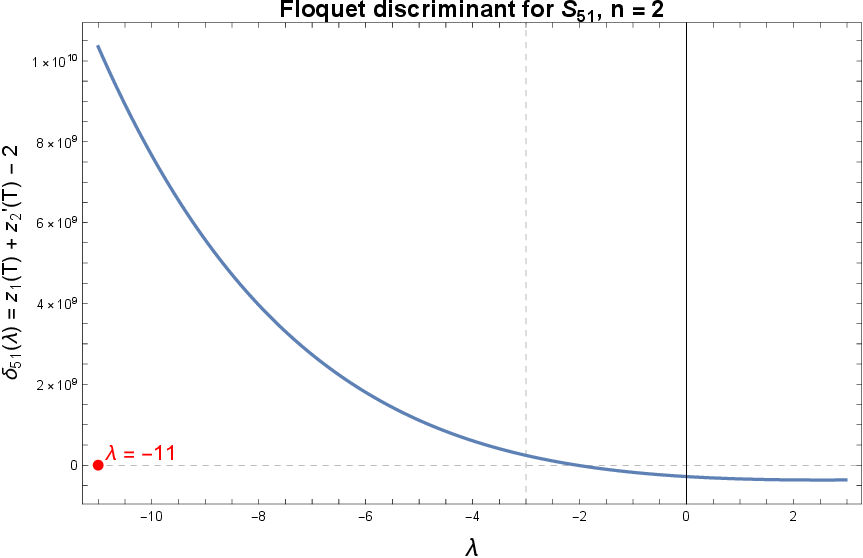}
  \caption{Graph of the Floquet discriminant $\delta_{51}$.  This image shows that $-2.5<\lambda_{51}^1<-1.5$.}
  \label{neq2S51}
\end{figure} 

\begin{figure}[ht]
  \centering
  \includegraphics[width=0.6\textwidth]{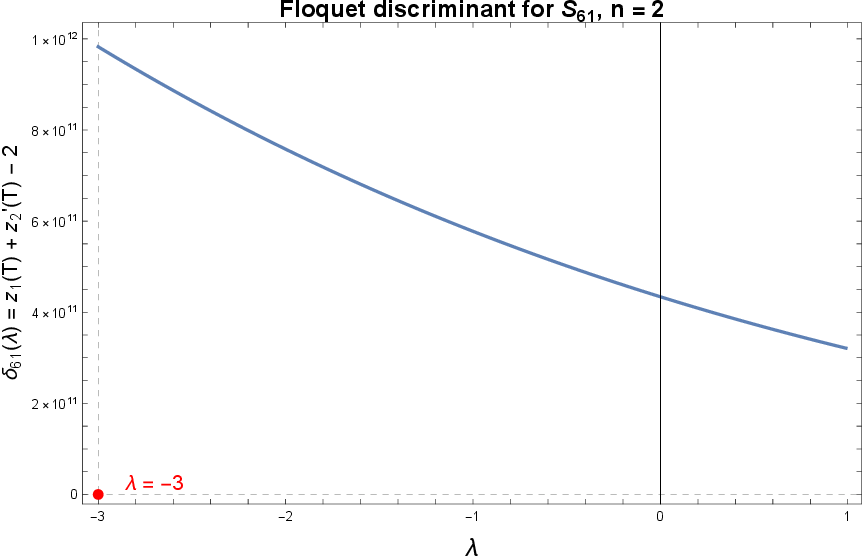}
  \caption{Graph of the Floquet discriminant $\delta_{61}$.  This image shows that $\lambda_{61}^1>0$.}
  \label{neq2S61}
\end{figure} 

\begin{figure}[ht]
  \centering
  \includegraphics[width=0.6\textwidth]{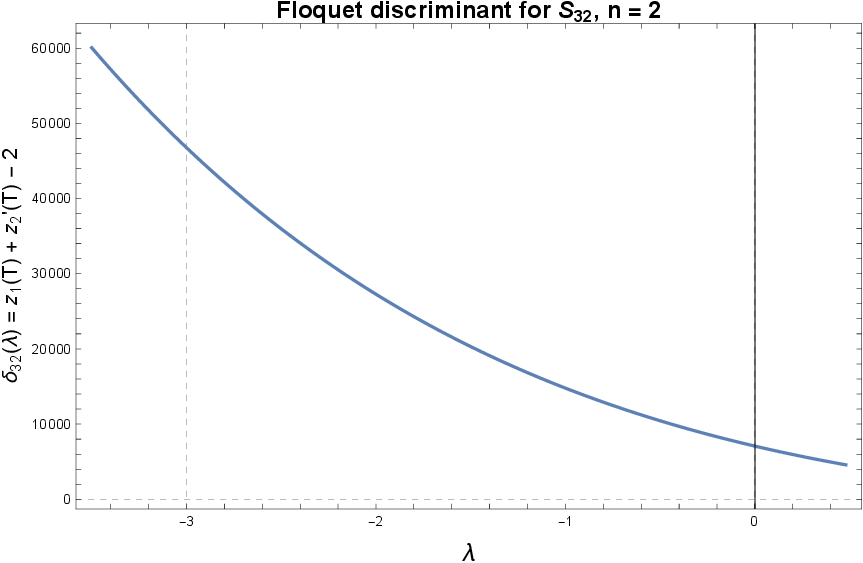}
  \caption{Graph of the Floquet discriminant $\delta_{32}$.  This image shows that $\lambda_{32}^1>0$.}
  \label{neq2S32}
\end{figure} 

Using the Intermediate Value Theorem and the numerical evaluation of the Floquet discriminant, we can locate the negative eigenvalues of the stability operator. More precisely, by detecting sign changes of the discriminant and refining the corresponding intervals, we numerically obtain the following negative eigenvalues together with their multiplicities:

\[
\begin{array}{ccc}
-20.19\ldots \; (\mathrm{mult.}\;1)
&
-19.52\ldots \; (\mathrm{mult.}\;1)
&
-18.74\ldots \; (\mathrm{mult.}\;4)
\\[0.2cm]
-15.37\ldots \; (\mathrm{mult.}\;4)
&
-9.80\ldots \; (\mathrm{mult.}\;4)
&
-3 \; (\mathrm{mult.}\;9,\ \mathrm{expected}\;5)
\\[0.2cm]
-2.05\ldots \; (\mathrm{mult.}\;4)
\end{array}
\]

\subsection{The stability index for $X_{CS}^n$ with $2<n\le 100$} Doing the same analysis, we can check that for values of $n$ with $2<n\le100$ the eigenvalue $\lambda_{51}^1$ becomes positive
and therefore we lose some negative eigenvalues for the stability operator. We did similar verifications for the cases $2<n\le 100$ as we did for the case $n=2$ and we numerically checked that only  the operators $S_{11},S_{22},S_{12}, S_{21},S_{31},S_{13},S_{14},S_{41}$ have negative eigenvalues, each contributing the same amount as in the case $n=2$. Therefore, the stability index of $X_{CS}^n$ is 

{\smaller
$$n^2+4n+3+2 * \frac{n^2+n-2}{2}+2*\frac{n}{6}(n+4)(n-1)=\frac{1}{3} \left(n^3+9 n^2+11 n+3\right)$$
}

Table \ref{table:shooting} contains representative numerical values of
$r_0$ and the quarter period $T/4$ for the embedded closed solutions.
The complete dataset, including the values of $r_0$ and $T/4$ for
$2\le n\le 260$, is available at

\begin{center}
\url{https://www.oscarperdomo.org/CarlottoSchulzExamples}.
\end{center}

\begin{table}[ht]
\centering
\begin{tabular}{ccc}
\hline
$n$ & $r_0$ & $T/4$ \\
\hline
2 & 1.2320724535 & 0.7280351819\\
3 & 1.2804538161 & 0.5765573834\\
4 & 1.3200537364 & 0.4902095753\\
5 & 1.3476517862 & 0.4335387880\\
6 & 1.3679073561 & 0.3927895127\\
7 & 1.3835563354 & 0.3616982744\\
8 & 1.3960950587 & 0.3369751800\\
9 & 1.4064256294 & 0.3167092702\\
10 & 1.4151246829 & 0.2997057081\\
\vdots & \vdots & \vdots\\
100 & 1.5222959845 & 0.0928895608\\
\hline
\end{tabular}
\caption{Representative numerical values of $r_0$ and the quarter period $T/4$ for the embedded closed solutions. The complete dataset is available at \href{https://www.oscarperdomo.org/CarlottoSchulzExamples}{data link}.}
\label{table:shooting}
\end{table}

\section{Verifying the first eigenvalue of the Laplacian}

The solution of the system of differential equations in Theorem~\ref{ThmCS} provides two $T$-periodic functions $r(t)$ and $\theta(t)$, together with a function $\alpha(t)$ satisfying $r(T/4)=\pi/2$ and $\alpha(T/4)=0$.

It was shown in \cite{P} that if $z_1(t)$ denotes the solution of
\[
z_1''(t)+\frac{n-1}{2}\bigl(\ln(EG)\bigr)'(t)z_1'(t)+(2n-1)z_1(t)=0
\]
with initial conditions $z_1(T/4)=1$ and $z_1'(T/4)=0$, then the first nonzero eigenvalue of the Laplace operator is $2n-1$ if and only if $z_1'(5T/4)>0$.

Once numerical approximations of $r_0$ and $T/4$ are available, this criterion can be checked directly. For each value of $n$, we numerically solve the differential equation satisfied by $z_1$ and evaluate $z_1'(5T/4)$. The values of $r_0$ and $T/4$ used in these computations can be found at \href{https://www.oscarperdomo.org/CarlottoSchulzExamples}{data link}. Table~\ref{table:yau} contains representative values of $z_1'(5T/4)$.

\begin{table}[ht]
\centering
\begin{tabular}{cc}
\hline
$n$ & $z_1'(5T/4)$ \\
\hline
2   & 5.81879\\
3   & 6.30013\\
4   & 7.11311\\
5   & 7.89077\\
10  & 11.6129\\
20  & 16.0353\\
50  & 24.9928\\
100 & 35.0016\\
150 & 44.8270\\
200 & 54.5678\\
\hline
\end{tabular}
\caption{Representative values of $z_1'(5T/4)$ for the
Carlotto--Schulz examples.}
\label{table:yau}
\end{table}

 Since all the computed values are positive, it follows from the
characterization above that the first nonzero eigenvalue of the Laplace
operator is $2n-1$ for all Carlotto-Schulz examples with
$2\le n\le 260$. Therefore, Yau's conjecture holds for all such examples.


\end{document}